\theoremstyle{plain}
\newtheorem{theorem}{Theorem}[section]
\newtheorem*{theorem*}{Theorem}
\newtheorem{prop}[theorem]{Proposition}
\newtheorem{rem}[theorem]{Remark}
\newtheorem{conj}[theorem]{Conjecture}
\newtheorem*{mt*}{Main Theorem}
\newcommand\R{{\mathbb R}}
\newcommand\Span{{\hbox{\em Span}}}
\newcommand{\Cpf}{$\mathcal{C}^\infty$-pure-and-full}
\title{Remarks on some compact symplectic solvmanifolds}
\author{Qiang Tan and Adriano Tomassini}
\address{Qiang Tan(\Letter): School of Mathematical Sciences, Jiangsu University, Zhenjiang, Jiangsu 212013, China}
\email{tanqiang@ujs.edu.cn }
\address{Adriano Tomassini: Dipartimento di Scienze Matematiche, Fisiche ed Informatiche,
Unit\`{a} di Matematica e Informatica\\
Universit\`{a} degli Studi di Parma\\
Parco Area delle Scienze 53/A, 43124\\
Parma, Italy}
\email{adriano.tomassini@unipr.it}
\keywords{symplectic solvmanifold; Hard Lefschetz Condition.}
\thanks{This work is supported by PRC grant NSFC 11701226 (Tan);
  Natural Science Foundation of Jiangsu Province BK20170519 (Tan)
 and Project PRIN ``Varietà reali e complesse: geometria, topologia e analisi armonica''
and by GNSAGA of INdAM}
\subjclass[2010]{53C55, 53C25}
\begin{document}

\maketitle

\begin{abstract}
We study the hard Lefschetz property on compact symplectic solvmanifolds, i.e.,
compact quotients $M=\Gamma\backslash G$ of a
simply-connected solvable Lie group $G$ by a lattice
$\Gamma$, admitting a symplectic structure.
\end{abstract}

 \medskip
\section{Introduction}
Let $(M,\omega)$ be a compact symplectic $2n$-manifold, that is,
 $M$ is a $2n$-dimensional smooth manifold endowed with a closed non-degenerate $2$-form $\omega$,
 where $\omega$ is called the {\em symplectic structure}.
   We say that a symplectic manifold $(M,\omega)$ satisfies the {\em Hard Lefschetz Condition},
 shortly the {\em HLC}, if for any $k\in\{0,1,\cdot\cdot\cdot,n\}$, the homomorphism
$$
L^k:H^{n-k}_{dR}(M)\to H^{n+k}_{dR}(M)
$$
$$
[\alpha]\mapsto[\alpha\wedge\omega^k]
$$
is surjective (cf. \cite{Yan}). As a classical result, compact K\"ahler manifolds satisfy HLC; nevertheless, there are compact symplectic
manifolds satisfying HLC, with no K\"ahler structure. In analogy with Riemannian Geometry, starting with the symplectic form $\omega$,
one can define a symplectic codifferential $d^\Lambda$
operator; it turns out (see \cite{Merkulov,Mathieu,Yan}) that a compact symplectic manifold satisfies the HLC if and only if the $dd^\Lambda$-Lemma
holds, or, equivalently, any de Rham class $\mathfrak{a}$ of $M$ contains a symplectic harmonic representative $\alpha$,
i.e., $\mathfrak{a}=[\alpha]$ and $\alpha\in \ker d\cap \ker d^\Lambda$.

The aim of this paper is to study the Hard Lefschetz Condition on compact solvmanifolds, i.e.,
compact quotients $M=\Gamma\backslash G$ of a
simply-connected solvable Lie group $G$ by a lattice
$\Gamma$ endowed with a symplectic structure. We will show the following\smallskip

{\bf Theorem} (see Theorem \ref{4-dim})\  {\em Let $G$ be a simply-connected $4$-dimensional Lie group admitting a uniform lattice $\Gamma$ and let $M=\Gamma\backslash G$. Let
$\omega$ be a symplectic structure on $M$. Then, if $\omega$ satisfies the HLC,
any other symplectic structure on $M$ satisfies the HLC}.
\smallskip

Furthermore, for several examples of compact $6$-dimensional symplectic solvmanifolds, we show that the same conclusion of the above Theorem holds
(see Theorems \ref{HLC},
\ref{6-dim} and Remark \ref{rem-6-dim}).
\section{Preliminaries}
Let $(M,\omega)$ be a $2n$-dimensional symplectic manifold. An almost complex structure $J$ on $M$, i.e.,
a smooth $(1,1)$-tensor field on
$M$ satisfying $J^2=-\hbox{id}$, is said to be $\omega$-{\em compatible}, if at any given $x\in M$, for every pair of tangent
vectors $u,v$ and any non-zero tangent vector $w$, the following hold
$$
\omega_x(Ju,Jv)=\omega(u,v),\qquad \omega_x(w,Jw)>0.
$$
In other words, $g_x(u,v)=\omega_x(u,Jv)$ is an {\em almost K\"ahler metric} on $M$ and $(M,J,g,\omega)$
is an {\em almost K\"ahler manifold}.

Let $(M,J)$ be a $2n$-dimensional almost complex manifold and let $\Lambda^k(M)$ be the bundle of
$k$-forms on $M$; denote by $A^k(M)=\Gamma(M,\Lambda^k(M))$ the set of smooth sections of $\Lambda^k(M)$. Then
$J$ acts on $A^2(M)$ as
an involution, by setting $J\alpha(u,v)=\alpha(Ju,Jv)$, for every pair of vector fields $u$, $v$ on $M$.
Denote by $A^+_J(M)$, (resp. $A^-_J(M)$) the spaces of $J$-invariant, (resp.
$J$-anti-invariant) forms, i.e.,
$$
A_J^\pm(M)=\{\alpha\in A^2(M)\,\,\,\vert\,\,\,J\alpha=\pm \alpha\}.
$$
and by
$$
\mathcal{Z}^\pm_J=\{\alpha\in A^\pm_J(M)\,\,\,\vert\,\,\,d\alpha =0\}\,.
$$
Then, following T.-J. Li and W. Zhang \cite{li-zhang}, define
$$
H^\pm_J (M)= \left\{ \mathfrak{a} \in H^2_{dR}(M;\R) \,\,\, \vert \, \,\,\exists\,\alpha \in {\mathcal Z}^{\pm}_J\,\,\vert\,\,
\mathfrak{a}=[\alpha]\right\}.
$$
Then \cite[Definition 4.12]{li-zhang}, $J$ is said to be {\em\Cpf}\  if
$$
H^{2}_{dR} (M;\R) =H^{+}_J (M) \oplus H^{-}_J (X).
$$
If $(M,J,g,\omega)$ is a $2n$-dimensional almost K\"ahler manifold, then the space $A_0^2(M)$ of {\em primitive} $2$-{\em forms} is
given by
$$
A_0^2(M)=\{\alpha\in A^2(M)\,\,\,\vert\,\,\, \omega^{n-1}\wedge\alpha=0\}
$$ and the {\em primitive J-invariant cohomology group} $H^+_{J,0}(M)$ \cite{TWZ} by
$$
H^+_{J,0}(M)=\{\mathfrak{a} \in H^2_{dR}(M;\R) \,\,\, \vert \, \,\,\exists\,\alpha \in {\mathcal Z}^{+}_J\cap A_0^2(M) \,\,\vert\,\,
\mathfrak{a}=[\alpha]\}.
$$
We will denote by $P_J:A^2_0(M)\to A^2_0(M)$ the {\em generalized Lejmi differential
operator} (see \cite{L} and \cite{TWZ}) defined on the space $A^2_0(M)$
as
$$
P_J(\psi)=\Delta_g\psi-\frac{1}{n}g(\Delta_g\psi,\omega)\omega,
$$
where $\Delta_g$ is the Hodge Laplacian and $g(\cdot,\cdot\cdot)$ is the metric induced by $g$ on the space of $2$-forms.

In the sequel we will assume that $M$ is a compact symplectic solvmanifold, that is a compact quotient $M=\Gamma\backslash G$ of a
simply-connected solvable Lie group $G$ by a lattice $\Gamma$, endowed with a symplectic structure $\omega$. A compact solvmanifold $M=\Gamma\backslash G$
is said to be  {\em completely solvable} if the adjoint representation $\hbox{ad}_X$
of the Lie algebra $\mathfrak{g}$ of $G$ has real eigenvalues for every $X\in\mathfrak{g}$.
\section{The $dd^\Lambda$-Lemma on symplectic manifolds and symplectic cohomologies}
Let $(M,\omega)$ be a compact symplectic manifold of dimension $2n$.
Then the non degenerate $2$-form $\omega$
induces a $\mathcal{C}^\infty$-bilinear form $\omega^{-1}$ on $A^k(M)$, setting pointwise on simple elements
$$
\omega^{-1}(\alpha_1\wedge\cdots\wedge\alpha_k,\beta_1\wedge\cdots\wedge\beta_k)
=\det (\omega^{-1}(\alpha_i,\beta_j)),
$$
where $\omega^{-1}$ is the natural bilinear form induced by $\omega$ on $T^*M$ and then extending
$\omega^{-1}$ linearly on $A^k(M)$.
Then the {\em symplectic star operator} $*_s:A^k(M)\to A^{2n-k}(M)$
is defined by the following representation formula: given any $\beta\in A^k(M)$, for every $\alpha\in A^k(M)$, set
$$
\alpha\wedge *_s\beta=\omega^{-1}(\alpha,\beta)\frac{\omega^n}{n!}.
$$
It turns out that $*_s^2=\hbox{id}$. Denote as usual by $L$, $\Lambda$ and $H$ the three basic operators defined
respectively as
$$
\begin{array}{ll}
L:A^{k}(M)\to A^{k+2}(M), & L:=\omega\wedge\cdot\\[5pt]
\Lambda:A^{k}(M)\to A^{k-2}(M), & \Lambda:=*_s^{-1}L*_s\\[5pt]
H:A^{k}(M)\to A^{k}(M), & H:=[L,\Lambda].
\end{array}
$$
Then $\{L,\Lambda,H\}$ gives rise to an action of $\mathfrak{sl}_2$ on $A^*(M)=\bigoplus_{k\geq 0}A^k(M)$, namely a $\mathfrak{sl}_2$-{\em triple} on $A^*(M)$.

The {\em symplectic codifferential} $d^\Lambda$ is defined, acting on $k$-forms, by means of the following formula
\begin{equation}\label{dlambda}
 d^\Lambda\vert_{A^k(M)}=(-1)^{k+1}*_sd*_s.
\end{equation}
In view of the basic symplectic identity (see e.g., \cite{Brylinski}), it turns out that the symplectic
codifferential is expressed by
\begin{equation}\label{symplectic-identity}
d^\Lambda=[d,\Lambda].
\end{equation}
Since $*_s^2=\hbox{id}$, it is $(d^\Lambda)^2=0$; then Brylinski defined the following natural symplectic cohomology
$$
H^k_{d^\Lambda}(M):=\frac{\ker d^\Lambda\cap A^k(M)}{\hbox{\rm Im}\, d^\Lambda\cap A^k(M)},
$$
showing that the symplectic star operator $*_s$ induces an isomorphism between $H^k_{dR}(M)$ and
$H^{2n-k}_{d^\Lambda}(M)$. Later, Tseng and Yau \cite{TY} introduced the {\em Bott-Chern} and {\em Aeppli symplectic cohomologies} respctively as
$$
H^k_{d+d^\Lambda}(M):=\frac{\ker(d+d^\Lambda)\cap A^k(M)}{\hbox{\rm Im}\, dd^\Lambda\cap A^k(M)},
$$
and
$$
H^k_{dd^\Lambda}(M):=\frac{\ker(dd^\Lambda)\cap A^k(M)}{(\hbox{\rm Im}\,d+\hbox{\rm Im}\,d^\Lambda)\cap A^k(M)}.
$$
Then such cohomologies groups are the symplectic counterpart of the Bott-Chern and Aeppli cohomology groups
respectively defined in the complex setting. Tseng and Yau developed a Hodge theory for such cohomologies, showing that Bott-Chern and Aeppli symplectic cohomologies on a compact symplectic manifold are isomorphicto the kernel of suitable $4$-order elliptic self-adjoint differential operators. Consequently,
the symplectic cohomology groups are finite-dimensional vector spaces on a compact
symplectic manifold.

By definition, a compact symplectic manifold $(M,\omega)$ is said to satisfy the \emph{$dd^\Lambda$-lemma} (see \cite[Definition 3.12]{TY}) if the natural map
$H^\bullet_{d+d^\Lambda}(M)\to H^{\bullet}_{dR}(M)$ is injective, i.e.,
every $d^\Lambda$-closed, $d$-exact form
is also $dd^\Lambda$-exact, that is
\begin{equation}\label{ddLambda}
\ker d^\Lambda\cap \hbox{\rm Im}\,d= \hbox{\rm Im}\,dd^\Lambda.
\end{equation}
Since
$$
 *_s(\ker d^\Lambda\cap \hbox{\rm Im}\,d)=\ker d\cap \hbox{\rm Im}\,d^\Lambda,
$$
then \eqref{ddLambda} holds if and only if the following holds
\begin{equation}
 \ker d\cap \hbox{\rm Im}\,d^\Lambda= \hbox{\rm Im}\,dd^\Lambda.
\end{equation}
\\
We collect all the known results just recalling the following
\begin{theorem}
Let $(M,\omega)$ be a compact symplectic manifold. Then the following facts are equivalent:
\begin{itemize}
\item[i)] the Hard-Lefschetz condition holds on $(M,\omega)$, i.e., for every $k\in\mathbb{Z}$, the maps
$$
L^k:H^{n-k}_{dR}(M)\to H^{n+k}_{dR}(M)
$$
are isomorphisms;
\item[ii)] any de Rham cohomology class $\mathfrak{a}$ has a symplectic harmonic representative, i.e.,
$\mathfrak{a}=[\alpha]$, where
$d\alpha=0$ and $d^\Lambda\alpha=0$;
\item[iii)] the $dd^\Lambda$-lemma holds;
\item[iv)] the natural maps induced by the identity $H^{\bullet}_{d+d^\Lambda}(M)
\longrightarrow H^{\bullet}_{dR}(M)$ are injective;
\item[v)] the natural maps induced by the identity $H^{\bullet}_{d+d^\Lambda}(M)
\longrightarrow H^{\bullet}_{dd^\Lambda}(M)$ are isomorphisms.
%
\end{itemize}
\end{theorem}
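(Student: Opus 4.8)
The statement collects several results that are by now classical, so the plan is to organize them into four groups of equivalences and, for each, indicate the mechanism and the reference, reserving a genuine argument only for the links that are not purely formal. The conceptual core is the equivalence $(i)\Leftrightarrow(ii)$, the theorem of Mathieu and Yan identifying the Hard Lefschetz Condition with the existence of symplectic harmonic representatives; everything else translates this into the Hodge-theoretic language of the symplectic cohomologies introduced above. Throughout I would use freely that $\{L,\Lambda,H\}$ defines an $\mathfrak{sl}_2$-action on $A^\bullet(M)$, together with the symplectic identity \eqref{symplectic-identity}, $d^\Lambda=[d,\Lambda]$, and $[d,L]=0$, which is immediate from $d\omega=0$.

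For $(i)\Leftrightarrow(ii)$ I would follow Mathieu and Yan \cite{Mathieu,Yan}. The representation theory of the Lefschetz $\mathfrak{sl}_2$-triple gives, pointwise, a Lefschetz decomposition of $A^\bullet(M)$ into primitive components and the bundle isomorphisms $L^k\colon\Lambda^{n-k}T^*M\to\Lambda^{n+k}T^*M$. The substantive content is that, under $(i)$, the induced Lefschetz decomposition on de Rham cohomology $H^\bullet_{dR}(M)=\bigoplus_{j\ge0}L^j\,PH^{\bullet-2j}_{dR}(M)$ can be realized by genuinely $d$-closed and $d^\Lambda$-closed forms. One cannot simply invert $L^k$ on a closed representative, since the pointwise preimage need not be closed; the argument instead constructs harmonic representatives inductively on the degree, the key step being the production of a \emph{primitive} harmonic representative of each primitive class, where the symplectic star $*_s$ and the isomorphism $H^k_{dR}(M)\cong H^{2n-k}_{d^\Lambda}(M)$ it induces are used in an essential way. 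This is exactly the place where the two implications become genuinely intertwined, and it is the step I expect to be the main obstacle.

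The equivalence $(ii)\Leftrightarrow(iii)$ rests on the symplectic Hodge theory of Brylinski and Merkulov \cite{Brylinski,Merkulov}: a de Rham class admits a representative $\alpha$ with $d\alpha=0$ and $d^\Lambda\alpha=0$ for \emph{every} class if and only if every $d^\Lambda$-closed, $d$-exact form is $dd^\Lambda$-exact, which is precisely the $dd^\Lambda$-lemma in the form \eqref{ddLambda}. The link $(iii)\Leftrightarrow(iv)$ is then nothing but the definition recalled above, since injectivity of the identity-induced map $H^\bullet_{d+d^\Lambda}(M)\to H^\bullet_{dR}(M)$ is by construction the identity $\ker d^\Lambda\cap\operatorname{Im}d=\operatorname{Im}dd^\Lambda$; here one also uses that $L^k\colon H^{n-k}_{dR}(M)\to H^{n+k}_{dR}(M)$ is surjective if and only if it is an isomorphism, by Poincar\'e duality.

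Finally, for $(iv)\Leftrightarrow(v)$ I would invoke the Tseng--Yau Hodge theory for the symplectic Bott--Chern and Aeppli cohomologies \cite{TY}. The essential input is that $H^\bullet_{d+d^\Lambda}(M)$ and $H^\bullet_{dd^\Lambda}(M)$ are finite-dimensional, being isomorphic to kernels of fourth-order self-adjoint elliptic operators, so that one may argue dimension by dimension. Granting the $dd^\Lambda$-lemma, a diagram chase formally identical to the one by which the complex $\partial\overline{\partial}$-lemma forces Bott--Chern, Aeppli and de Rham cohomologies to coincide shows that the identity-induced map $H^\bullet_{d+d^\Lambda}(M)\to H^\bullet_{dd^\Lambda}(M)$ is an isomorphism; conversely, if this map is an isomorphism then every $dd^\Lambda$-closed class can be adjusted to a representative satisfying \eqref{ddLambda}, returning the lemma. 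Combining the four groups yields the equivalence of the conditions $(i)$ through $(v)$.
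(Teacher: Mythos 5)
Your proposal is correct and takes essentially the same approach as the paper: the paper offers no argument of its own for this theorem, simply citing Brylinski, Mathieu, Merkulov, Yan, Cavalcanti and Tseng--Yau for precisely the equivalences you group together, and your sketch organizes those same references with the same division of labor (Mathieu--Yan for $(i)\Leftrightarrow(ii)$, Merkulov--Brylinski for $(ii)\Leftrightarrow(iii)$, the definition for $(iii)\Leftrightarrow(iv)$, Tseng--Yau for $(iv)\Leftrightarrow(v)$). The only stray detail is that your Poincar\'e-duality remark (surjectivity of $L^k$ iff isomorphism) pertains to reconciling item $(i)$ with the introduction's formulation of HLC, not to the step $(iii)\Leftrightarrow(iv)$ where you placed it.
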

For the proof see \cite[Conjecture 2.2.7]{Brylinski}, \cite[Corollary 2]{Mathieu}, \cite[Proposition 1.4]{Merkulov},
\cite[Theorem 0.1]{Yan},
\cite[Theorem 5.4]{C},
\cite[Proposition 3.13]{TY}
Therefore, according to the above Theorem, starting with a compact symplectic manifold $(M,\omega)$ satisfying the Hard Lefschetz Condition, then the de Rham cohomology algebra
$H^*_{dR}(M)=\bigoplus_{k\geq 0}H^k_{dR}(M)$ carries an $\mathfrak{sl}_2$-action. Indeed, by $ii)$, every de Rham cohomology
class of $M$
contains a symplectic harmonic
representative. Hence, the symplectic star operator $*_s$, and consequently, the operator
$\Lambda =*_s^{-1}L*_s$ are well defined on $H^*_{dR}(M)$,
by taking symplectic harmonic representatives.

Summing up, the de Rham cohomology of compact symplectic manifolds
satisfyng HLC shares with that of  K\"ahler manifolds an action of the Lie algebra $\mathfrak{sl}_2$. In the latter case and also in the Hyper-K\"ahler setting, Figueroa-O'Farrill, K\"ohl, and Spence \cite{FKS}, following an idea of Witten \cite{W}, showed that such an action and the Hodge-Lefschetz theory of compact (Hyper)-K\"ahler manifolds derive from the supersymmetry, more in particular, from the symmetries of certain supersymmetric sigma models (see also \cite{Z}).
Finally, it has to be remarked that HLC, or equivalently, the notion of $dd^\Lambda$-Lemma on compact symplectic manifolds, is a special case of the notion of the $dd^{\mathcal{J}}$-Lemma on generalized complex manifolds. For general results of such a notion and for other results in the context of supersymmetry we refer to \cite{C} and \cite{T} respectively.

\section{Hard Lefschetz Condition on 4-dimensional compact symplectic solvmanifolds}
Let $(M,\omega)$ be a compact symplectic manifold.
Then, it is easy to see that the set of all symplectic forms on $M$, $SF(M)$, is an open set in $\mathbb{R}^{b^2(M)}$,
where $b^2(M)$ is the second Betti number of $M$.
Let
$$
SF_{HLC}(M)\triangleq\{\Omega\in SF(M):(M,\Omega)\,\,\,{\rm has}\,\,\,{\rm HLC}\}.
$$
Since every K\"{a}hler manifold has the Hard Lefschetz Condition (cf. \cite{GH}),
then we have the following proposition
\begin{prop}
Every symplectic form on torus of dimension $2n$ is cohomologous to a K\"{a}hler form and thus has the Hard Lefschetz Condition.
\end{prop}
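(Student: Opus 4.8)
The plan is to realize the de Rham class $[\omega]$ by a translation-invariant, constant-coefficient closed $2$-form and then to check that such a representative is in fact a K\"ahler form; once this is done, the Hard Lefschetz Condition transfers automatically from the representative to $\omega$, since the HLC is a purely cohomological condition.

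Write $T^{2n}=\R^{2n}/\Z^{2n}$ with global coordinates $x_1,\dots,x_{2n}$, and equip it with the flat metric $g_0=\sum_i dx_i\otimes dx_i$. With respect to $g_0$ the forms $dx_{i_1}\wedge\cdots\wedge dx_{i_k}$ are parallel, so the Hodge Laplacian acts on $\sum_I f_I\,dx_I$ componentwise as $\sum_I(\Delta f_I)\,dx_I$; as harmonic functions on the compact manifold $T^{2n}$ are constant, the space of $g_0$-harmonic forms coincides with the constant-coefficient forms. Hodge theory then yields a (harmonic) representative $\omega_0=\sum_{i<j}a_{ij}\,dx_i\wedge dx_j$, with $a_{ij}\in\R$, of the class $[\omega]\in H^2_{dR}(T^{2n};\R)$; by construction $\omega_0$ is closed and $[\omega_0]=[\omega]$.

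Next I would verify that $\omega_0$ is nondegenerate. Its top power $\omega_0^n$ is again a constant-coefficient form, hence $\omega_0^n=c\,dx_1\wedge\cdots\wedge dx_{2n}$ for some $c\in\R$; on the other hand $[\omega_0^n]=[\omega^n]$, so
$$
c\cdot\mathrm{Vol}(T^{2n})=\int_{T^{2n}}\omega_0^n=\int_{T^{2n}}\omega^n\neq 0,
$$
the last inequality holding because $\omega^n$ is a volume form. Thus $c\neq 0$ and $\omega_0^n\neq 0$, i.e. $\omega_0$ is a constant-coefficient symplectic form on $T^{2n}$, equivalently a linear symplectic form on $\R^{2n}$. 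Standard linear symplectic algebra then furnishes a constant complex structure $J$ on $\R^{2n}$ compatible with $\omega_0$, so that $g(u,v)=\omega_0(u,Jv)$ is positive definite. Being constant-coefficient, $J$ has vanishing Nijenhuis tensor and hence defines an integrable complex structure on $T^{2n}$; combined with the closed compatible form $\omega_0$, this exhibits $(T^{2n},J,\omega_0)$ as a flat K\"ahler manifold, so $\omega_0$ is a K\"ahler form cohomologous to $\omega$.

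Finally, to transfer the HLC from $\omega_0$ to $\omega$ I would invoke characterization $i)$ of the Theorem in Section 3: the Lefschetz maps $L^k\colon H^{n-k}_{dR}(T^{2n})\to H^{n+k}_{dR}(T^{2n})$, $[\alpha]\mapsto[\alpha]\smile[\omega]^k$, depend only on the class $[\omega]$. Since $[\omega]=[\omega_0]$, these maps coincide for $\omega$ and for $\omega_0$, and the classical fact that K\"ahler manifolds satisfy the HLC (cf. \cite{GH}) then gives the HLC for $\omega$ as well. The only steps requiring a word of justification are the nondegeneracy of the constant representative $\omega_0$---handled by the integral comparison above---and the existence of a compatible constant $J$, a purely algebraic fact about a single symplectic vector space; neither is a genuine obstacle, the remainder being routine Hodge theory on the flat torus.
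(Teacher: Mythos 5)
Your proof is correct and follows essentially the same route as the paper: replace $\omega$ by a cohomologous constant-coefficient (i.e.\ flat-harmonic) symplectic form $\omega_0$, take a compatible constant complex structure $J$, which is automatically integrable, conclude $\omega_0$ is K\"ahler, and transfer the HLC to $\omega$ since the Lefschetz maps depend only on $[\omega]$. In fact your write-up supplies details the paper leaves implicit, notably the verification via $\int_{T^{2n}}\omega_0^n=\int_{T^{2n}}\omega^n\neq 0$ that the harmonic representative is nondegenerate.
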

\begin{proof}
Let $\omega$ be a symplectic form on the torus, then it cohomologous to a sympplectic form $\omega_0$ which can be
expressed as a constant coefficient combination of the standard bases $\{ dx^i\wedge dx^j \}$.
For such an $\omega_0$ there exists a calibrated almost complex structure $J_0$, which is in fact integrable.
Therefore, $\omega_0$ is K\"{a}hler, and consequently it satisfies the HLC. The same holds for $\omega$.
\end{proof}

 K. Hasegawa has proven the following result in \cite{Hase}:
A compact solvmanifold admits  a K\"{a}hler structure if and only if it is a finite quotient of a complex torus which has a structure of
a complex torus bundle over a complex torus.
In particular, a compact solvmanifold of completely solvable type has a K\"{a}hler structure if and only if it is a complex torus.

\vskip 6pt

In the rest of this section we focus on Hard Lefschetz Condition $4$-dimensional compact homogeneous
manifolds $M=\Gamma\backslash G$, where $G$ is a simply-connected $4$-dimensional Lie group and $\Gamma$ is a
uniform lattice in $G$, endowed with a symplectic structure.
\begin{theorem}\label{4-dim}
Let $G$ be a simply-connected $4$-dimensional Lie group admitting a uniform lattice $\Gamma$ and let $M=\Gamma\backslash G$. Let
$\omega$ be a symplectic structure on $M$. Then, if $\omega$ satisfies the HLC,
any other symplectic structure on $M$ satisfies the HLC.
\end{theorem}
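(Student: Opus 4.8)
The plan is to reduce the Hard Lefschetz Condition in real dimension four to a single linear-algebraic condition that depends only on the de Rham class $[\omega]\in H^2_{dR}(M)$, and then to analyse that condition on the finite-dimensional model provided by left-invariant forms. Since $n=2$, the maps $L^0=\mathrm{id}$ and $L^2\colon H^0_{dR}(M)\to H^4_{dR}(M)$, $[1]\mapsto[\omega^2]$, are automatically isomorphisms (both spaces are one-dimensional and $[\omega^2]\neq0$), so $(M,\omega)$ satisfies the HLC if and only if $L=L^1\colon H^1_{dR}(M)\to H^3_{dR}(M)$ is an isomorphism. By Poincar\'e duality $b_1=b_3$, hence this is equivalent to the non-degeneracy of the \emph{alternating} bilinear form
$$B_\omega([\alpha],[\beta])=\int_M\alpha\wedge\beta\wedge\omega$$
on $H^1_{dR}(M)$. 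The key observation is that $B_\omega$ depends only on $[\omega]$, so the HLC is a property of the class $[\omega]$ alone; in particular $b_1$ must be even for it to hold.

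Next I would pass to the invariant model. Since $G$ admits a lattice it is unimodular, and I would invoke the Nomizu--Hattori description (and Mostow's condition in the non-completely-solvable case) to represent $H^*_{dR}(M)$ by the Chevalley--Eilenberg cohomology $H^*(\g)$ of left-invariant forms. Any symplectic class then has an invariant, non-degenerate representative $\omega$: its invariant representative satisfies $\int_M\omega\wedge\omega=[\omega]^2>0$, and for an invariant closed $2$-form $\omega\wedge\omega$ is a constant multiple of the Pfaffian times the volume, forcing $\omega$ to be non-degenerate. As $\g$ is solvable, $b_1\ge1$. If $b_1$ is odd, then $B_\omega$ is a degenerate alternating form and the HLC fails for \emph{every} symplectic structure, so the statement holds vacuously; if $b_1=4$ then $\g$ is abelian, $M=T^4$, and the preceding Proposition gives the HLC for every symplectic form. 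This leaves the single case $b_1=2$.

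For $b_1=2$ I would pick $e_3,e_4$ spanning the ideal $\mathfrak h=[\g,\g]$, complete to a basis $e_1,\dots,e_4$ with dual coframe $e^1,\dots,e^4$, so that $e^1,e^2$ are the closed invariant $1$-forms representing $H^1$. The ideal $\mathfrak h$, being a $2$-dimensional nilpotent ideal, is abelian; setting $A=\ad_{e_1}|_{\mathfrak h}$ and $B=\ad_{e_2}|_{\mathfrak h}$, unimodularity gives $\mathrm{tr}\,A=\mathrm{tr}\,B=0$ and the Jacobi identity gives $[A,B]=0$. A direct computation shows $B_\omega\neq0$ if and only if the component $\omega_{34}=\omega(e_3,e_4)$ is nonzero, equivalently $c\cup[\omega]\neq0$ with $c:=[e^1\wedge e^2]$. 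If $c=0$, then $e^1\wedge e^2$ is exact and $\omega_{34}=0$ for every closed $\omega$, so the HLC fails for all symplectic structures. If $c\neq0$, I would show that no closed non-degenerate invariant form can have $\omega_{34}=0$: using $de^1=de^2=0$, closedness of $\omega$ reduces to the single matrix relation $B^\top u=A^\top v$ for the mixed blocks $u=(\omega_{13},\omega_{14})$, $v=(\omega_{23},\omega_{24})$, while the requirement $\dim\mathfrak h=2$ together with $\mathrm{tr}\,A=\mathrm{tr}\,B=0$ and $[A,B]=0$ forces one of $A,B$ to be invertible (commuting traceless $2\times2$ matrices are proportional, and a single nilpotent $\ad$ cannot generate a $2$-dimensional commutator once $c\neq0$). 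Invertibility then gives $v=\lambda u$, so $u,v$ are linearly dependent and $\omega$ is degenerate. Hence every symplectic class has $\omega_{34}\neq0$ and satisfies the HLC, giving the "all or none" dichotomy claimed.

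The main obstacle is precisely the structural step in the case $b_1=2$, $c\neq0$: one must rule out the "mixed" situation where some symplectic form has $\omega_{34}\neq0$ and another has $\omega_{34}=0$. Since the symplectic cone is open and connected and $\omega\mapsto\omega_{34}$ is linear, the only way to forbid this is to prove that the hyperplane $\{\omega_{34}=0\}$ meets no non-degenerate form, and the content of that is exactly that a $2$-dimensional commutator together with unimodularity and Jacobi forces an invertible $\ad$, which rigidifies $B_\omega$ across the whole cone. A secondary point requiring care is the passage to invariant representatives when $G$ is not completely solvable; there I would either invoke Mostow's condition or fall back on the explicit classification of four-dimensional solvmanifolds admitting lattices, verifying in each case that the relevant classes in $H^1$ and $H^2$ admit invariant representatives.
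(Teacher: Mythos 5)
Your core reduction is attractive and genuinely different from the paper's argument, but there is a gap at the step where you pass to the invariant model, and it is not a technicality: it is exactly the point where the paper's classification-based route does work and yours does not. You propose to ``invoke the Nomizu--Hattori description (and Mostow's condition in the non-completely-solvable case) to represent $H^*_{dR}(M)$ by $H^*(\g)$''. Mostow's condition is a hypothesis on the lattice, not a theorem, and it genuinely fails for the one non-completely-solvable group that occurs here, namely $G=\widetilde{E}(2)\times\R$ (the paper's case 4, $\mathfrak{r}'_{3,0}\times\R$). Concretely, write $\widetilde{E}(2)=\R\ltimes_{\rho}\R^2$ with $\rho(t)$ rotation by $t$, and take $\Gamma=\bigl(2\pi\Z\ltimes\Z^2\bigr)\times\Z$. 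Since $\rho(2\pi)=\mathrm{id}$, $\Gamma\cong\Z^4$ and $M=\Gamma\backslash G$ is diffeomorphic to $T^4$. Then $b_1(M)=4$ while $\dim H^1(\g)=2$ and $\g$ is \emph{not} abelian, so the Chevalley--Eilenberg complex does not compute $H^*_{dR}(M)$; your inference ``$b_1=4\Rightarrow\g$ abelian'' is false; and your stated fallback --- verifying that the relevant classes in $H^1$ and $H^2$ admit invariant representatives --- fails as well, because the (injective) map $H^2(\g)\to H^2_{dR}(T^4)$ has only a two-dimensional image, whereas the symplectic classes of $T^4$ form an open cone in a six-dimensional space, so most of them have no invariant representative at all. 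Your entire $b_1$-dichotomy and the matrix argument live inside the invariant model, so these manifolds are simply not reached by your proof. The repair is precisely the input the paper uses and you tried to avoid: compact quotients of $\widetilde{E}(2)\times\R$ are diffeomorphic either to $T^4$ (then apply the paper's Proposition on tori) or to hyperelliptic surfaces (there $\dim H^i(\g)=b_i(M)$ for $i=1,2$, so injectivity of invariant cohomology plus a dimension count does restore your model, or one argues directly with the K\"ahler structure).

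Two secondary points. First, Theorem \ref{4-dim} does not assume $G$ solvable, so before writing $[\g,\g]$ and calling it a nilpotent ideal you must exclude $\widetilde{SL}(2,\R)\times\R$ and $SU(2)\times\R$; the paper does this via Chu's theorem. Second, your parenthetical claim that $c\neq0$ forces one of $A,B$ to be invertible is correct but rests on an unproven duality: $[e^1\wedge e^2]=0$ in $H^2(\g)$ if and only if $[e_1,e_2]\notin\mathrm{Im}\,A+\mathrm{Im}\,B$; granting that, $c\neq 0$ gives $\mathrm{Im}\,A+\mathrm{Im}\,B=[\g,\g]$, and proportionality of commuting traceless $2\times2$ matrices finishes the argument --- this should be spelled out. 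With those caveats, where your argument does apply (all completely solvable cases, via Hattori) it is correct and is a nice alternative to the paper: reducing HLC in dimension four to nondegeneracy of $B_\omega([\alpha],[\beta])=\int_M\alpha\wedge\beta\wedge\omega$ on $H^1_{dR}(M)$, which depends only on $[\omega]$, disposes of the nilpotent cases without Benson--Gordon and of $\mathfrak{sol}^3\times\R$ without the paper's explicit cup-product computation; but as it stands the non-completely-solvable case is a genuine hole, not a ``secondary point requiring care''.
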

\begin{proof} Let $\mathfrak{g}$ be the Lie algebra of $G$. First of all,
recall that according to \cite[Theorem 9]{Chu}, a $4$-dimensional symplectic Lie algebra is solvable. Therefore,
$\mathfrak{g}$ is a unimodular and symplectic $4$-dimensional Lie algebra. According to \cite{Ov}, we have the following
list:
\begin{enumerate}
\item[0)] $\R^4$\vskip.2truecm\noindent
 \item[1)] $\mathfrak{nil}^3\times \R$;\vskip.2truecm\noindent
\item[2)] $\mathfrak{nil}^4$;\vskip.2truecm\noindent
\item[3)] $\mathfrak{sol}^3\times \R$;\vskip.2truecm\noindent
\item[4)] $\mathfrak{r}^{'}_{3,0}\times \R$.
\end{enumerate}
Denoting by $\{e^1,\ldots,e^4\}$ a basis of the dual space $\mathfrak{g}^*$, we can present the Lie algebras above by
the following Maurer-Cartan structure equations:
\begin{enumerate}
\item[0)] $\mathfrak{g}=\R^4$, $de^i=0$, $i=1,\ldots,4$.\\[10pt]
\item[1)] $\mathfrak{g}=\mathfrak{nil}^3\times \R$,
$$
de^1=0, \quad de^2=0, \quad de^3=-e^{12}, \quad de^4=0.
$$\\[10pt]
\item[2)] $\mathfrak{g}=\mathfrak{nil}^4$,
$$
de^1=e^{24}, \quad de^2=e^{34}, \quad de^3=0, \quad de^4=0.
$$\\[10pt]
\item[3)] $\mathfrak{g}=\mathfrak{sol}^3\times\R$,
$$
de^1=0, \quad de^2=e^{12}, \quad de^3=-e^{13}, \quad de^4=0.
$$\\[10pt]
\item[4)] $\mathfrak{g}=\mathfrak{r}'_{3,0}\times \R$,
$$
de^1=0, \quad de^2=-e^{13}, \quad de^3=e^{12}, \quad de^4=0,
$$
\end{enumerate}
where $e^{ij}:=e^i\wedge e^j$ and so on.
By assumption $\omega$ is a symplectic structure on $M$ satisfying the HLC. Therefore, in view of Benson and Gordon Theorem (cf. \cite{Benson,Benson2}), every
symplectic structure on any
compact quotient corresponding to case 1) and 2) does not satisfies HLC. The compact quotients corresponding to cases 0)
and 4) are complex solvmanifolds, namely {\em Complex Tori} and {\em Hyperelliptic Surfaces} respectively, so that any symplectic structure on such manifolds
satisfies the HLC. Finally, if $M=\Gamma\backslash G$ is a compact quotient corresponding to case 3),
we easily compute
\begin{enumerate}
\item[$\bullet$] $H^1_{dR}(M)=\Span_\R \langle [e^1],[e^4]\rangle $\\[10pt]
\item[$\bullet$] $H^2_{dR}(M)=\Span_\R \langle [e^{14}],[e^{23}]\rangle $;\\[10pt]
\item[$\bullet$] $H^3_{dR}(M)=\Span_\R \langle [e^{123}],[e^{234}]\rangle $.\\[10pt]
\end{enumerate}
Let $\omega_0=Ae^{14}+Be^{23}$, for $A,B\in\R$, $AB\neq 0$. Then $\omega_0$ is a symplectic structure on $M$ and it is immediate to check that it satisfies
the Hard Lefschetz condition. Let now $\omega$ be an arbitrary symplectic form on $M$. Then
$$
\omega = \omega_0+d\eta
$$
and, consequently, $\omega$ satisfies the HLC too.
\end{proof}

 \medskip

\section{Almost K\"ahler structures and Hard Lefschetz Condition on Nakamura manifolds}
The construction of completely solvable Nakamura manifolds (cf. \cite{Naka}) is well known.
For the sake of completeness we briefly recall it.
Let $A \in SL(2, \mathbb{Z})$ have two real positive distinct eigenvalues
\begin{displaymath}
\mu_1 =e^{\lambda}, \quad \mu_2 =e^{-\lambda}.
\end{displaymath}
Set
\begin{displaymath}
\Lambda = \left(\begin{array}{c c}
e^{-\lambda} & 0 \\
0 &  e^{\lambda}  \\
\end{array}\right)
\end{displaymath}
and let $P \in M_{2,2}(\R)$ be such that
\begin{displaymath}
\Lambda= P A P^{-1}
\end{displaymath}
Define $\Gamma := P\mathbb{Z}^2+ i P \mathbb{Z}^2$; then $\Gamma$ is a uniform discrete subgroup in $\mathbb{C}^2$, so that
$$
\mathbb{T}^2_{\mathbb{C}}= \mathbb{C}^2 / \Gamma
$$
is a $2$-dimensional complex torus and the map
\begin{align*}
& F : \mathbb{C}^2 \longrightarrow \mathbb{C}^2 \\
& F(z) = \Lambda z , \quad \textrm{where} \quad z=(z^1,z^2)^t,
\end{align*}
induces a biolomorphism of $\mathbb{T}^2_{\mathbb{C}}$ by setting $\tilde{F}([z])= [F(z)]$. Indeed, it is immediate to check that $\tilde{F}$ is well defined
and that $\tilde{F}$ is a biholomorphism, with $\tilde{F}^{-1}([z]) = [F^{-1}(z)]$. \\
By identifying $\R \times \mathbb{C}^2$ with $\R^5$ by $(s,z^1,z^2)\longmapsto (s, x^1,x^2,x^3,x^4)$, where
$z^1=x^1+i x^3$, $z^2= x^2+i x^4$, set
\begin{align*}
& T_1 : \R^5 \longrightarrow \R^5 \\
& T_1(s, x^1,x^2,x^3,x^4)= (s+\lambda, e^{-\lambda}x^1, e^{\lambda}x^2, e^{-\lambda}x^3, e^{\lambda}x^4),
\end{align*}
then $T_1(s,x^1,x^2,x^3,x^4)= T_1(s,z^1,z^2)= (s+\lambda, F(z^1,z^2))$. Therefore $T_1$ induces
a transformations of $\R \times \mathbb{T}^2_{\mathbb{C}}$, by setting
\begin{displaymath}
T_1(s,[(z^1,z^2)])=(s+\lambda , [F(z^1,z^2)]).
\end{displaymath}

Define
\begin{displaymath}
N^6 := \mathbb{S}^1\times \frac{\R \times
\mathbb{T}^2_{\mathbb{C}}}{<T_1>} . \vspace{4mm}
\end{displaymath}
Then $N^6$ is a compact $6$-dimensional solvmanifold of completely solvable type.

We give a numerical example.
Let
\begin{displaymath}
 A = \left(\begin{array}{c c}
3 & -1 \\
1 &  0  \\
\end{array}\right)
\end{displaymath}
 $A \in SL(2,\mathbb{Z})$.
Then $\mu_{1,2}= \frac{3\pm \sqrt{5}}{2}$. We set
\begin{displaymath}
\mu_1= \frac{3- \sqrt{5}}{2}= e^{-\lambda} \quad \textrm{and} \quad
\mu_2= \frac{3+\sqrt{5}}{2}= e^{\lambda},
\end{displaymath}
i.e., $\lambda= \log (\frac{3+\sqrt{5}}{2})$. Then
\begin{displaymath}
 P^{-1} = \left(\begin{array}{c c}
\frac{3-\sqrt{5}}{2} & \frac{3+\sqrt{5}}{2} \\
1 &  1  \\
\end{array}\right),
\end{displaymath}
and
\begin{displaymath}
 P = \left(\begin{array}{c c}
1 & -\frac{3+\sqrt{5}}{2} \\
-1 &  \frac{3-\sqrt{5}}{2}  \\
\end{array}\right)
\end{displaymath}
and the uniform lattice $\Gamma$ is given by
\begin{displaymath}
\Gamma= \Span_{\mathbb{Z}}<\left[\begin{array}{c}
-\frac{\sqrt{5}}{5} \\ \frac{\sqrt{5}}{5}\\ 0 \\ 0
\end{array}\right],
\left[\begin{array}{c}
\frac{5+3\sqrt{5}}{10}\\ \frac{5-3\sqrt{5}}{10} \\ 0 \\0
\end{array}\right],
\left[\begin{array}{c}
0\\ 0 \\ \frac{-\sqrt{5}}{5}\\ \frac{\sqrt{5}}{5}
\end{array}\right],
\left[\begin{array}{c}
0 \\  0  \\ \frac{5+3\sqrt{5}}{10}\\ \frac{5-3\sqrt{5}}{10}
\end{array}\right]>.
\end{displaymath}

By using previous notations, it is straightforward to check that
\begin{equation}\label{coframe-Nakamura}
\begin{cases}
e^1 := ds, \\
e^2 := dt, \\
e^3 := e^{s}dx^1,\\
e^4 := e^{-s}dx^2,\\
e^5 := e^{s} dx^3,\\
e^6 := e^{-s}dx^4.
\end{cases}
\end{equation}
gives rise to a global coframe on $N^6$,
where $dt$ is the global $1$-form on $\mathbb{S}^1$.
Therefore, with respect to $\{e^i\}_{i\in \{1, \ldots, 6\}}$ the structure equations are the following:
\begin{displaymath}
\begin{cases}
de^1=0,\\
de^2=0,\\
de^3=e^{13},\\
de^4=-e^{14},\\
de^5=e^{15}, \\
de^6=-e^{16}.
\end{cases}
\end{displaymath}
Then $(J,\omega,g)$ defined respectively as
\begin{equation}
\label{almost-complex-Nakamura}
\begin{cases}
J e^1 := -e^2,\\
Je^3 := -e^4,\\
Je^5 := -e^6,
\end{cases}
\end{equation}
\begin{equation}
\label{symplectic-Nakamura}
\omega := e^{12}+ e^{34}+ e^{56} ,
\end{equation}
and $g(\cdot,\cdot)=\omega(\cdot,J\cdot)$ give rise to an almost K\"ahler structure on $N^6$. Furthermore,
\begin{displaymath}
\begin{cases}
\psi ^1 := e^1+i e^2,\\
\psi ^2 := e^3+ie^4,\\
\psi ^3 := e^5+ie^6.
\end{cases}
\end{displaymath}
is a complex co-frame of $(1,0)$-forms on $(M^6,J)$; one can compute
\begin{displaymath}
\begin{cases}
d\psi ^1= 0,\\
d \psi ^2 = \frac{1}{2} (\psi ^{1\bar{2}}+ \psi ^{\bar{1}\bar{2}}),\\
d \psi ^3= \frac{1}{2}( \psi ^{1\bar{3}}+ \psi ^{\bar{1}\bar{3}}),
\end{cases}
\end{displaymath}
Since $b_1(N^6)=2$, $b_2(N^6)=5$ (see \cite{deT}), we obtain
\begin{align*}
H^1_{dR}(N^6)\simeq& \Span_\R <\psi ^{1}+\psi^{\bar{1}}, i(\psi ^{\bar{1}}-\psi^{1})>\\
H^2_{dR}(N^6)\simeq& \Span_\R <i \psi ^{1\bar{1}}, i \psi ^{2\bar{2}},i \psi ^{3\bar{3}},
i( \psi ^{2\bar{3}}+\psi ^{3\bar{2}})>, < i(\psi ^{23}-\psi^{\bar{2}\bar{3}})>,
\end{align*}
and consequently,
\begin{align*}
H^+_J(N^6) &= \R <i \psi ^{1\bar{1}}, i \psi ^{2\bar{2}},i \psi ^{3\bar{3}},
i( \psi ^{2\bar{3}}+\psi ^{3\bar{2}})> \\
H^-_J(N^6) &= \R < i(\psi ^{23}-\psi^{\bar{2}\bar{3}})>
\end{align*}
and the primitive $J$-invariant cohomology
$$
H^+_{J,0}(N^6)= <i(\psi ^{1\bar{1}}-\psi ^{3\bar{3}}), i(\psi ^{2\bar{2}}-\psi ^{3\bar{3}}),
i(\psi ^{2\bar{3}}+\psi ^{3 \bar{2}})>,
$$
so that the dimensions of such groups are
\begin{displaymath}
h^+_J=4, \quad h^-_J = 1, \quad h^+_{J,0}= 3.
\end{displaymath}
Then according to \cite[Proposition 2.3]{TWZ}
\begin{displaymath}
\dim_\R \ker P_J = h^+_{J,0}+h^-_J = 3 + 1 = 4 = b_2-1,
\end{displaymath}
and $J$ is $\mathcal{C}^{\infty}$-pure-and-full.
 We can show the following
\begin{theorem}\label{HLC}
Let $\Omega$ be any symplectic form on a Nakamura manifold $N^6$. Then $\Omega$ satisfies the HLC.
Thus, $SF_{HLC}(N^6)=SF(N^6)$.
\end{theorem}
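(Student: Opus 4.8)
The plan is to reduce the statement to a finite linear-algebra computation on the invariant (Chevalley--Eilenberg) cohomology and then verify that the Lefschetz maps are isomorphisms for every admissible class. First I would observe that the Hard Lefschetz Condition depends only on the de Rham class $[\Omega]$, since the maps $L^k\colon H^{n-k}_{dR}(N^6)\to H^{n+k}_{dR}(N^6)$, $[\alpha]\mapsto[\alpha\wedge\Omega^k]$, are defined on cohomology. As $N^6$ is a completely solvable solvmanifold, its de Rham cohomology is computed by the invariant forms (Hattori's theorem), so every class has an invariant representative and it suffices to check HLC for every invariant closed $2$-form whose class is symplectic. Rewriting the basis of $H^2_{dR}(N^6)$ found above in the real coframe as $e^{12},e^{34},e^{56},e^{36},e^{45}$ (each of which is $d$-closed), a general such class is represented by
$$
\Omega_0=a\,e^{12}+b\,e^{34}+c\,e^{56}+d\,e^{36}+f\,e^{45},\qquad a,b,c,d,f\in\R .
$$

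Next I would pin down the symplectic locus. A direct wedge computation gives $\Omega_0^3=6\,a(bc+df)\,e^{123456}$, so $[\Omega_0]$ is symplectic precisely when $a\neq0$ and $bc+df\neq0$, and this is the only constraint on $(a,b,c,d,f)$. Since $\dim N^6=6$, i.e.\ $n=3$, HLC amounts to the bijectivity of $L^k$ for $k=0,1,2,3$, and by Poincar\'e duality each $L^k$ maps between spaces of equal dimension, so injectivity suffices. The cases $k=0$ and $k=3$ are immediate: $L^0=\mathrm{id}$, and $L^3\colon H^0_{dR}(N^6)\to H^6_{dR}(N^6)$ sends $[1]$ to $[\Omega_0^3]\neq0$ by nondegeneracy. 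For $k=2$ I would compute $\Omega_0^2$ and find $e^1\wedge\Omega_0^2=2(bc+df)\,e^{13456}$ and $e^2\wedge\Omega_0^2=2(bc+df)\,e^{23456}$; since $[e^{13456}]$ and $[e^{23456}]$ pair nondegenerately with the basis $[e^1],[e^2]$ of $H^1_{dR}(N^6)$ under Poincar\'e duality, they form a basis of $H^5_{dR}(N^6)$, whence $L^2\colon H^1_{dR}(N^6)\to H^5_{dR}(N^6)$ is an isomorphism exactly because $bc+df\neq0$.

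The heart of the argument is the case $k=1$, i.e.\ $L\colon H^2_{dR}(N^6)\to H^4_{dR}(N^6)$. Here I would use that $L$ is an isomorphism if and only if the symmetric bilinear form
$$
Q_{\Omega_0}([\alpha],[\beta])=\int_{N^6}\Omega_0\wedge\alpha\wedge\beta
$$
on $H^2_{dR}(N^6)$ is nondegenerate, which follows from Poincar\'e duality between $H^2$ and $H^4$. Evaluating $Q_{\Omega_0}$ on the basis $e^{12},e^{34},e^{56},e^{36},e^{45}$ produces an explicit $5\times5$ symmetric matrix whose diagonal vanishes (each basis element is a decomposable $2$-form, so $v_i\wedge v_i=0$) and whose off-diagonal entries are the coefficients $a,b,c,d,f$.

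The main obstacle is then the purely computational step of showing that this Gram determinant is nonzero on the entire symplectic locus. Carrying out the cofactor expansion, I expect to obtain $\det Q_{\Omega_0}=-2\,a^3(bc+df)$, which is nonzero precisely under the symplectic condition $a(bc+df)\neq0$. This closes the case $k=1$; combining the four cases shows that every symplectic class on $N^6$ satisfies HLC, and therefore $SF_{HLC}(N^6)=SF(N^6)$.
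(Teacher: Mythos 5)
Your proposal is correct and follows essentially the same route as the paper: reduce to an invariant representative $\Omega_0=a\,e^{12}+b\,e^{34}+c\,e^{56}+d\,e^{36}+f\,e^{45}$ (HLC depends only on $[\Omega]$, and the invariant forms compute the cohomology), extract the symplectic constraint $a(bc+df)\neq 0$ from $\Omega_0^3$, and verify the Lefschetz maps by an explicit finite-dimensional linear-algebra check, with the $H^1\to H^5$ computation identical to the paper's. The only cosmetic difference is in the step $L\colon H^2_{dR}(N^6)\to H^4_{dR}(N^6)$: you organize it as the Gram determinant of the cup-product pairing $\int_{N^6}\Omega_0\wedge\alpha\wedge\beta$ (and your expected value $-2a^3(bc+df)$ is indeed correct), while the paper lists the images $[\Omega]\cup[e^{ij}]$ of the basis classes of $H^2_{dR}(N^6)$; these amount to the same $5\times 5$ determinant, so the two arguments coincide.
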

\begin{proof}
The first and second de Rham cohomology groups of $N^6$ can be expressed in terms of the real coframe $\{e^1,\ldots,e^6\}$ as
\begin{align*}
H^1_{dR}(N^6)\simeq& \Span_\R  <e^1, e^2>\\
H^2_{dR}(N^6)\simeq& \Span_\R  <e^{12}, e^{34},e^{56},e^{36},e^{45}>.
\end{align*}
Let $\Omega$ be a symplectic form on $N^6$. Then $\Omega$ can be written as
$$
\Omega = c_1e^{12} +c_2e^{34} +c_3e^{56} +c_4 e^{36} +c_5e^{45} +d\eta\,,
$$
where $c_i\in\R,\,i=1,\ldots ,5$ and $\eta$ is a suitable $1$-form on $N^6$.
We obtain that
$$
\Omega^3 = 6c_1(c_2c_3+c_4c_5)e^{123456} +d\eta'.
$$
Since by assumption $\Omega$ is a symplectic structure on $N^6$, we get
\begin{equation}\label{symplectic-condition}
c_1(c_2c_3+c_4c_5)\neq 0.
\end{equation}
A direct computation shows that
$$
\begin{array}{ll}
{}&{}[\Omega]^2\cup [e^1]=[2(c_2c_3+c_4c_5)e^{13456})], \\[10pt]
{}&{}[\Omega]^2\cup [e^2]=[2(c_2c_3+c_4c_5)e^{23456})],
\end{array}
$$
and
$$
\begin{array}{ll}
{}&{}[\Omega]\cup [e^{12}]=[c_2e^{1234}+c_3e^{1256}+c_4e^{1236}+c_5e^{1245}], \\[10pt]
{}&{}[\Omega]\cup [e^{56}]=[c_1e^{1256}+c_2e^{3456}], \\[10pt]
{}&{}[\Omega]\cup [e^{34}]=[c_1e^{1234}+c_3e^{3456}], \\[10pt]
{}&{}[\Omega]\cup [e^{45}]=[c_1e^{1245}+c_4e^{3456}], \\[10pt]
{}&{}[\Omega]\cup [e^{36}]=[c_1e^{1236}+c_5e^{3456}].
\end{array}
$$
Therefore, by the above computations it turns out that
$$
\left\{
\begin{array}{ll}
[\Omega]^2& : H^1_{dR}(N^6)\to H^5_{dR}(N^6)\\[10pt]
[\Omega]& : H^2_{dR}(N^6)\to H^4_{dR}(N^6)
\end{array}
\right.
$$
are isomorphisms if and only if
$$
c_1(c_2c_3+c_4c_5)\neq 0,
$$
which is exactly the condition \eqref{symplectic-condition}. This ends the proof.
\end{proof}

 \medskip

\section{A six-dimensional cohomologically K\"{a}hler manifold with no K\"{a}hler metrics}

   Let $G(c)$ be the connected completely solvable Lie group of dimension $5$ consisting of matrices of the form
  \begin{equation*}
     a=\left(
     \begin{array}{cccccc}
       e^{cz} & 0 & 0 & 0 & 0 & x_1 \\
       0 & e^{cz} & 0 & 0 & 0 & y_1 \\
       0 & 0 & e^{cz} & 0 & 0 & x_2 \\
       0 & 0 & 0 & e^{cz} & 0 & y_2 \\
       0 & 0 & 0 & 0 & 1 & z \\
       0 & 0 & 0 & 0 & 0 & 1 \\
     \end{array}
    \right),
   \end{equation*}
   where $x_i,\,\,y_i,\,\,z\in\mathbb{R}$ $(i=1,2)$ and $c$ is a nonzero real number.
   Then a global system of coordinates $x_1,\,\,y_1,\,\,x_2,\,\,y_2$ and $z$ for $G(c)$ is given by
   $x_i(a)=x_i$, $y_i(a)=y_i$ and $z(a)=z$.
   A standard calculation shows that a basis for the right invariant $1$-forms on $G(c)$ consists of
   \begin{equation*}
     \{dx_1-cx_1dz,\,\,dy_1-cy_1dz,\,\,dx_2-cx_2dz,\,\,dy_2-cy_2dz,\,\,dz\}.
   \end{equation*}
   Alternatively, the Lie group $G(c)$ may be described as a semidirect product $G(c)=\mathbb{R}\ltimes_{\psi}\mathbb{R}^4$,
   where $\psi(z)$ is the linear transformation of $\mathbb{R}^4$ given by the matrix
   \begin{equation*}
    \left(
      \begin{array}{cccc}
        e^{cz} & 0 & 0 & 0 \\
        0 & e^{-cz}& 0 & 0 \\
        0 & 0 & e^{cz} & 0 \\
        0 & 0 & 0 & e^{-cz} \\
      \end{array}
    \right),
   \end{equation*}
for any $z\in\mathbb{R}$. Thus, $G(c)$ has a discrete subgroup
$$
\Gamma(c)=\mathbb{Z}\ltimes_{\psi}\mathbb{Z}^4
$$
such that the quotient space $\Gamma(c)\backslash G(c)$ is compact. Therefore, the forms
$$
dx_i-cx_idz, \quad dy_i-cy_idz,\qquad dz,\quad i=1,2
$$
descend to $1$-forms $\alpha_i$, $\beta_i$ and $\gamma$ $i=1,2$ on $\Gamma(c)\backslash G(c)$.

L.C. de Andr\'{e}s, M. Fern\'{a}ndez, M. de Le\'{o}n,
and J.J. Menc\'{\i}a considered the manifold $M^6(c)=G(c)/\Gamma(c)\times S^1$ which is a compact completely solvmanifold
(see \cite{dFdM}).
Moreover,  M. Fern\'{a}ndez, V. Mu\~{n}oz and J. A. Santisteban have proven that $M^6(c)$ does not admit any K\"{a}hler metric  {\rm(cf. \cite{FMS})}.
Here, there are $1$-forms $\alpha_1$, $\beta_1$, $\alpha_2$, $\beta_2$, $\gamma$ and $\eta$ on $M^6(c)$ such that
\begin{equation}
d\alpha_i=-c\alpha_i\wedge\gamma,\,\,\,d\beta_i=-c\beta_i\wedge\gamma,\,\,\,d\gamma=d\eta=0,
\end{equation}
where $i=1,2$ and such that at each point of $M^6(c)$, $\{\alpha_1, \beta_1, \alpha_2, \beta_2, \gamma,\eta\}$
is a basis for the $1$-forms on $M^6(c)$.
Using Hattori's theorem \cite{Hattori}, they compute the real cohomology of $M^6(c)$:
\begin{eqnarray*}
H^0_{dR}(M^6(c)) &\simeq& \Span_\R \langle 1\rangle, \nonumber\\
H^1_{dR}(M^6(c)) &\simeq& \Span_\R \langle [\gamma],\,\,[\eta]\rangle, \nonumber\\
H^2_{dR}(M^6(c)) &\simeq& \Span_\R \langle    [\alpha_1\wedge\beta_1],\,\,[\alpha_1\wedge\beta_2],\,\,
[\alpha_2\wedge\beta_1],\,\,[\alpha_2\wedge\beta_2],\,\,[\gamma\wedge\eta]\rangle, \nonumber\\
H^3_{dR}(M^6(c)) &\simeq& \Span_\R \langle[\alpha_1\wedge\beta_1\wedge\gamma],\,\,[\alpha_1\wedge\beta_2\wedge\gamma],\,\,[\alpha_2\wedge\beta_1\wedge\gamma],\,\,
[\alpha_2\wedge\beta_2\wedge\gamma], \nonumber\\
&& [\alpha_1\wedge\beta_1\wedge\eta],\,\,[\alpha_1\wedge\beta_2\wedge\eta],\,\,[\alpha_2\wedge\beta_1\wedge\eta],\,\,
[\alpha_2\wedge\beta_2\wedge\eta]\rangle, \nonumber\\
H^4_{dR}(M^6(c)) &\simeq& \Span_\R \langle [\alpha_1\wedge\beta_1\wedge\alpha_2\wedge\beta_2],\,\,[\alpha_1\wedge\beta_1\wedge\gamma\wedge\eta],\,\,
[\alpha_1\wedge\beta_2\wedge\gamma\wedge\eta], \nonumber\\
&& [\alpha_2\wedge\beta_1\wedge\gamma\wedge\eta],\,\,[\alpha_2\wedge\beta_2\wedge\gamma\wedge\eta] \rangle,\nonumber\\
H^5_{dR}(M^6(c)) &\simeq& \Span_\R \langle [\alpha_1\wedge\beta_1\wedge\alpha_2\wedge\beta_2\wedge\gamma],\,\,
[\alpha_1\wedge\beta_1\wedge\alpha_2\wedge\beta_2\wedge\eta]\rangle, \nonumber\\
H^6_{dR}(M^6(c)) &\simeq& \Span_\R \langle [\alpha_1\wedge\beta_1\wedge\alpha_2\wedge\beta_2\wedge\gamma\wedge\eta]\rangle.
\end{eqnarray*}
Therefore, the Betti number of $M^6(c)$ are
\begin{eqnarray*}
     b^0 &=& b^6=1,\nonumber\\
     b^1 &=& b^5=2,\nonumber \\
     b^2 &=& b^4=5, \nonumber\\
     b^3 &=& 8 .
\end{eqnarray*}

We denote by $(g,J,\omega)$ be an almost K\"{a}hler structure on $M^6(c)$, where we choose
$$
g=\alpha_1\otimes\alpha_1+\beta_1\otimes\beta_1+\alpha_2\otimes\alpha_2+\beta_2\otimes\beta_2+\gamma\otimes\gamma+\eta\otimes\eta
$$
and
$$
\omega=\alpha_1\wedge\beta_1+\alpha_2\wedge\beta_2+\gamma\wedge\eta.
$$
So $J$ is given by
$$
J\alpha_1=-\beta_1,\,\,\,J\alpha_2=-\beta_2,\,\,\,J\gamma=-\eta.
$$
It is clear that the maps
$$
[\omega]:H^2_{dR}(M^6(c))\rightarrow H^4_{dR}(M^6(c))
$$
and
$$
[\omega]^2:H^1_{dR}(M^6(c))\rightarrow H^5_{dR}(M^6(c))
$$
are isomorphisms. Thus, $(M^6(c),\omega)$ satisfies the Hard Lefschetz Condition.
By simple calculation, we can get
\begin{eqnarray*}
     H^-_J &=& Span_{\mathbb{R}}\{[\alpha_1\wedge\beta_2-\alpha_2\wedge\beta_1]\},  \\
    H^+_J &=& Span_{\mathbb{R}}\{[\alpha_1\wedge\beta_2+\alpha_2\wedge\beta_1],\,\,[\alpha_1\wedge\beta_1],\,\,
                  [\alpha_2\wedge\beta_2],\,\,[\gamma\wedge\eta]\},  \\
     \ker P_J&=& Span_{\mathbb{R}}\{\alpha_1\wedge\beta_2-\alpha_2\wedge\beta_1,\,\,\alpha_1\wedge\beta_2+\alpha_2\wedge\beta_1,  \nonumber \\
             &&  \alpha_1\wedge\beta_1-\gamma\wedge\eta,\,\,\alpha_2\wedge\beta_2-\gamma\wedge\eta   \}.
\end{eqnarray*}
Hence, $\dim\ker P_J=4=b^2-1$. Of course, $J$ is $C^\infty$ pure and full (cf. \cite{TWZ}).

In the following, we will study the other symplectic structures on $M^6(c)$.
   Set
   $$
   \xi_1=\alpha_1\wedge\beta_2+\alpha_2\wedge\beta_1,\,\,\,\xi_2=\alpha_1\wedge\beta_1-\gamma\wedge\eta
   ,\,\,\,\xi_3=\alpha_2\wedge\beta_2-\gamma\wedge\eta
   $$
  and $\theta=\alpha_1\wedge\beta_2-\alpha_2\wedge\beta_1$.
  Let $\Omega=c\omega+c_1\xi_1+c_2\xi_2+c_3\xi_3+a\theta$, where $c,c_i,a\in\mathbb{R}$.
  By direct calculation,
  \begin{eqnarray*}
    \Omega^3 &=& 6(c^3-cc_1^2-cc_2^2-cc_3^2+ca^2-cc_2c_3\\
     && +c_1^2c_2+c_1^2c_3-c_2c_3^2-c_2a^2-c_2^2c_3-c_3a^2)\alpha_1\wedge\beta_1\wedge\alpha_2\wedge\beta_2\wedge\gamma\wedge\eta.
  \end{eqnarray*}
  Then $\Omega$ is a symplectic form if and only if
  \begin{equation}\label{sym condition}
    c^3-cc_1^2-cc_2^2-cc_3^2+ca^2-cc_2c_3+c_1^2c_2+c_1^2c_3-c_2c_3^2-c_2a^2-c_2^2c_3-c_3a^2\neq  0.
  \end{equation}
  A direct computation shows that
$$
\begin{array}{ll}
{}&{}[\Omega]^2\cup [\gamma]=[2(c^2-c_1^2+a^2+cc_2+cc_3+c_2c_3)\alpha_1\wedge\beta_1\wedge\alpha_2\wedge\beta_2\wedge\gamma], \\[10pt]
{}&{}[\Omega]^2\cup [\eta]=[2(c^2-c_1^2+a^2+cc_2+cc_3+c_2c_3)\alpha_1\wedge\beta_1\wedge\alpha_2\wedge\beta_2\wedge\eta],
\end{array}
$$
and
$$
\begin{array}{ll}
{}&{}[\Omega]\cup [\alpha_1\wedge\beta_1]=[(c-c_2-c_3)\alpha_1\wedge\beta_1\wedge\gamma\wedge\eta+(c+c_3)\alpha_1\wedge\beta_1\wedge\alpha_2\wedge\beta_2], \\[10pt]
{}&{}[\Omega]\cup [\alpha_1\wedge\beta_2]=[(c-c_2-c_3)\alpha_1\wedge\beta_2\wedge\gamma\wedge\eta+(a-c_1)\alpha_1\wedge\beta_1\wedge\alpha_2\wedge\beta_2], \\[10pt]
{}&{}[\Omega]\cup [\alpha_2\wedge\beta_1]=[(c-c_2-c_3)\alpha_2\wedge\beta_1\wedge\gamma\wedge\eta-(a+c_1)\alpha_1\wedge\beta_1\wedge\alpha_2\wedge\beta_2], \\[10pt]
{}&{}[\Omega]\cup [\alpha_2\wedge\beta_2]=[(c-c_2-c_3)\alpha_2\wedge\beta_2\wedge\gamma\wedge\eta+(c+c_2)\alpha_1\wedge\beta_1\wedge\alpha_2\wedge\beta_2], \\[10pt]

{}&{}[\Omega]\cup [\gamma\wedge\eta]=[(c+c_2)\alpha_1\wedge\beta_1\wedge\gamma\wedge\eta+(c+c_3)\alpha_2\wedge\beta_2\wedge\gamma\wedge\eta
\\[10pt]
{}&{} \,\,\,\,\,\,\,\,\,\,\,\,\,\,\,\,\,\,\,\,\,\,\,\,\,\,\,\,\,\,\,\,\,\,\,\,\,\,\,\,\,\,\,\,\,\,\,\,
           +(c_1+a)\alpha_1\wedge\beta_2\wedge\gamma\wedge\eta+(c_1-a)\alpha_2\wedge\beta_1\wedge\gamma\wedge\eta ].
\end{array}
$$
 It turns out that
  $$
  \left\{
  \begin{array}{ll}
  [\Omega]^2& : H^1_{dR}(M^6(c))\to H^5_{dR}(M^6(c))\\[10pt]
  [\Omega]& : H^2_{dR}(M^6(c))\to H^4_{dR}(M^6(c))
   \end{array}
  \right.
  $$
   are isomorphisms if and only if
   \begin{equation}\label{HLC condition}
    (c-c_2-c_3)(c^2-c_1^2+a^2+cc_2+cc_3+c_2c_3)\neq 0,
   \end{equation}
   which is exactly the condition \eqref{sym condition}.
    It means that all symplectic forms on $M^6(c)$ satisfy the Hard Lefschetz Condition. Therefore, we proved the following
    \begin{theorem}\label{6-dim}
    Let $\Omega$ be any symplectic form on $M^6(c)$. Then $\Omega$ satisfies the HLC.
    Thus, $SF_{HLC}(M^6(c))=SF(M^6(c))$.
     \end{theorem}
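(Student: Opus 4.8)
The plan is to exploit complete solvability so as to reduce the HLC for an arbitrary symplectic form to a finite-dimensional linear-algebra check on invariant forms. Since $M^6(c)$ is a compact completely solvable solvmanifold, Hattori's theorem \cite{Hattori} identifies its de Rham cohomology with the cohomology of the subcomplex of left-invariant forms, those generated by $\{\alpha_1,\beta_1,\alpha_2,\beta_2,\gamma,\eta\}$; this is precisely the computation of $H^\bullet_{dR}(M^6(c))$ recalled above. In particular every class of degree $2$ admits an invariant representative, so an arbitrary symplectic form $\Omega$ is cohomologous to an invariant closed $2$-form. Because the HLC depends only on the classes $[\Omega],[\Omega]^2,[\Omega]^3$, I may replace $\Omega$ by that invariant representative; since the five forms $\omega,\xi_1,\xi_2,\xi_3,\theta$ are linearly independent in $H^2_{dR}$ they furnish a basis of the invariant closed $2$-forms, so the representative is exactly a member of the family $\Omega=c\omega+c_1\xi_1+c_2\xi_2+c_3\xi_3+a\theta$ introduced above.

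Next I would unwind the HLC into the three Lefschetz maps. On a $6$-manifold the HLC (condition $i)$ of the equivalences recalled in Section~3) demands that $L^3:H^0_{dR}\to H^6_{dR}$, $L^2:H^1_{dR}\to H^5_{dR}$ and $L:H^2_{dR}\to H^4_{dR}$ be isomorphisms, where $L$ is cup product with $[\Omega]$. The top map sends $1$ to $[\Omega^3]$, hence is an isomorphism precisely when $\Omega^3\neq 0$, which is the non-degeneracy condition \eqref{sym condition} already recorded. Since $b^1=b^5$ and $b^2=b^4$ by Poincar\'e duality, the other two maps go between spaces of equal dimension, so each is an isomorphism exactly when the determinant of its matrix in the displayed cohomology bases is nonzero.

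The heart of the argument is then the explicit linear algebra summarised above. From the formulas for $[\Omega]^2\cup[\gamma]$ and $[\Omega]^2\cup[\eta]$, the map $L^2$ on $H^1_{dR}$ is, in the displayed bases, scalar multiplication by $2(c^2-c_1^2+a^2+cc_2+cc_3+c_2c_3)$, so it is invertible iff this quadratic factor is nonzero. For $L$ on $H^2_{dR}$ one forms the $5\times 5$ matrix read off from the five displayed cup products; the main obstacle is to evaluate its determinant and recognise that it factors as a nonzero multiple of $(c-c_2-c_3)^3\bigl(c^2-c_1^2+a^2+cc_2+cc_3+c_2c_3\bigr)$, whose zero set is exactly that of \eqref{HLC condition}. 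Granting this factorization, both maps are isomorphisms precisely when \eqref{HLC condition} holds.

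Finally I would close the loop with the polynomial identity that \eqref{HLC condition} is, up to a nonzero constant, the very cubic \eqref{sym condition}: expanding $(c-c_2-c_3)(c^2-c_1^2+a^2+cc_2+cc_3+c_2c_3)$ reproduces the degree-three symplectic polynomial term by term. Hence the HLC holds for $\Omega$ if and only if $\Omega$ is non-degenerate, which is automatic for a symplectic form; therefore every symplectic form on $M^6(c)$ satisfies the HLC and $SF_{HLC}(M^6(c))=SF(M^6(c))$. The genuinely nontrivial steps are the $5\times 5$ determinant evaluation and the recognition of this algebraic coincidence between the non-degeneracy and Lefschetz conditions; everything else is the structural reduction to invariant representatives.
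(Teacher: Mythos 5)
Your proposal is correct and follows essentially the same route as the paper: reduce to an invariant representative $\Omega=c\omega+c_1\xi_1+c_2\xi_2+c_3\xi_3+a\theta$ via the cohomology computation, carry out the explicit cup products, and identify the Lefschetz non-degeneracy condition \eqref{HLC condition} with the symplectic condition \eqref{sym condition}. The one step you left as ``granted'' checks out: the $5\times 5$ determinant of $[\Omega]\cup\cdot$ on $H^2_{dR}(M^6(c))$ equals $-2(c-c_2-c_3)^3\bigl(c^2-c_1^2+a^2+cc_2+cc_3+c_2c_3\bigr)$, so its vanishing locus is exactly that of \eqref{HLC condition}, as you claimed.
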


     \begin{rem}\label{rem-6-dim}
     In addition, we still calculate two other compact completely solvmanifolds $N^6(c)$ and $P^6(c)$ in \cite{FMS}.
     We have found that all symplectic forms on these two manifolds satisfy the Hard Lefschetz Condition.
     \end{rem}

  At last, we want to propose the following conjecture:
       \begin{conj}
       Suppose that $M$ is a compact completely solvable manifold.
           Let $\omega$ be a symplectic structure on $M$. If $\omega$ satisfies the HLC,
                 then any other symplectic structure on $M$ satisfies the HLC.
                 \end{conj}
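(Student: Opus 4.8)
The plan is to reduce the conjecture, by a symmetrization argument, to a purely algebraic statement about the cohomology algebra of $\g$, and then to isolate the genuinely hard point as a containment of determinantal varieties.

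First I would exploit complete solvability exactly as in the examples treated in Theorems~\ref{HLC} and~\ref{6-dim}. By Hattori's theorem the de Rham cohomology $H^\bullet_{dR}(M)$ is computed by the Chevalley--Eilenberg complex $\Lambda^\bullet\g^*$ of invariant forms, so every class has an invariant representative. Hence an arbitrary symplectic form $\Omega$ can be written $\Omega=\Omega_0+d\eta$ with $\Omega_0\in\Lambda^2\g^*$ closed and invariant. Since $[\Omega_0^{\,n}]=[\Omega^{\,n}]\neq 0$ and an invariant top-degree form is nonzero precisely when its class is, $\Omega_0^{\,n}$ is a nonzero multiple of the volume, so $\Omega_0$ is itself symplectic. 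Because the maps $L^k=[\Omega]^k:H^{n-k}_{dR}(M)\to H^{n+k}_{dR}(M)$ depend only on the class $[\Omega]=[\Omega_0]$, the form $(M,\Omega)$ satisfies the HLC if and only if $(M,\Omega_0)$ does. This reduces the conjecture to invariant symplectic forms, i.e. to a statement about classes in $H^2_{dR}(M)=H^2(\g)$.

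Next I would recast the HLC cohomologically. For each $p\le n$ consider the pairing on $H^p_{dR}(M)$ given by $Q_p([\alpha],[\beta])=\int_M\omega^{\,n-p}\wedge\alpha\wedge\beta$. By Poincar\'e duality $Q_p$ is non-degenerate if and only if $L^{n-p}:H^p_{dR}(M)\to H^{2n-p}_{dR}(M)$ is an isomorphism, so the HLC is equivalent to the simultaneous non-degeneracy of $Q_0,\dots,Q_n$. In bases of the finite-dimensional groups $H^p_{dR}(M)$ the determinant $D_p([\omega])$ of $Q_p$ (a Pfaffian square when $p$ is odd) is a homogeneous polynomial of degree $(n-p)\,b_p$ in the coordinates of $[\omega]\in H^2_{dR}(M)$, while the symplectic-volume polynomial $D_0([\omega])=[\omega^{\,n}]$ is exactly the symplecticity condition. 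The existence of one HLC symplectic form guarantees that every $D_p$ is a \emph{nonzero} polynomial, so the HLC holds on a nonempty Zariski-open subset of $H^2_{dR}(M)$. The conjecture is then equivalent to the inclusion of zero loci
\[
V(D_p)\subseteq V(D_0)\qquad(1\le p\le n)
\]
inside $H^2_{dR}(M)$: wherever an intermediate Lefschetz map degenerates, the symplectic volume must already vanish. Equivalently, each $D_p$ divides a power of $[\omega^{\,n}]$. This is precisely the factorization verified by hand in Theorems~\ref{HLC} and~\ref{6-dim} (for $N^6$ the single factor $c_1(c_2c_3+c_4c_5)$ controls both symplecticity and HLC). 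I would try to prove the inclusion structurally, ordering the coframe dictated by the Maurer--Cartan equations of a completely solvable $\g$ so that the Lefschetz matrices become block-triangular, with diagonal blocks built from the few linear and quadratic factors already appearing in $[\omega^{\,n}]$; failing a uniform normal form, the fallback is a dimension-by-dimension classification of unimodular symplectic completely solvable Lie algebras, as was carried out for $\dim=4$ in Theorem~\ref{4-dim}.

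The main obstacle is exactly this inclusion $V(D_p)\subseteq V(D_0)$. For an abstract Poincar\'e-duality algebra it is \emph{false} that the existence of a single Lefschetz element forces every class of nonzero top power to satisfy HLC, so the argument must genuinely use the arithmetic of the Chevalley--Eilenberg cohomology of completely solvable Lie algebras; knowing merely that the $D_p$ are not identically zero does not produce the containment of varieties. This is why the statement is posed as a conjecture, and I expect that either a uniform triangularization of the Lefschetz operators or a complete classification in each dimension will be needed to close the gap.
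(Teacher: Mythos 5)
This statement is posed in the paper as a \emph{conjecture}: the paper contains no proof of it, only supporting evidence, namely the verification in dimension four (Theorem \ref{4-dim}) and for particular six-dimensional completely solvable examples (Theorems \ref{HLC}, \ref{6-dim} and Remark \ref{rem-6-dim}). Your proposal, as you yourself acknowledge in its final paragraph, is also not a proof: the decisive step is left open, so there is a genuine gap --- indeed exactly the gap that makes the statement a conjecture rather than a theorem, and no amount of reduction in your first two paragraphs closes it.

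That said, your reductions are correct and they faithfully systematize what the paper does by hand. Complete solvability gives Hattori's isomorphism between the Chevalley--Eilenberg cohomology of $\mathfrak{g}$ and $H^\bullet_{dR}(M)$, so every symplectic form is cohomologous to an invariant one; since an invariant top-degree form is a constant multiple of the invariant volume form, the invariant representative of a symplectic class is itself symplectic, and HLC depends only on the class. This is precisely the mechanism implicit in the proofs of Theorems \ref{HLC} and \ref{6-dim}, where $\Omega$ is written as an invariant form plus $d\eta$ and the exact term plays no role. Your reformulation of the conjecture as the containment $V(D_p)\subseteq V(D_0)$ of the zero loci of the Lefschetz determinants inside $H^2(\mathfrak{g})$ is also correct, and it isolates exactly what the paper checks case by case: that the HLC determinant and the symplectic-volume polynomial vanish on the same locus (the factor $c_1(c_2c_3+c_4c_5)$ in condition \eqref{symplectic-condition} for $N^6$, and the identity of conditions \eqref{sym condition} and \eqref{HLC condition} for $M^6(c)$). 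But your own closing observation is what defeats the proposal as a proof: for an abstract Poincar\'e-duality algebra a single Lefschetz class does not force this containment, and you supply no structural property of the cohomology of completely solvable unimodular Lie algebras that would; the proposed ``block-triangularization'' of the Lefschetz matrices and the ``dimension-by-dimension classification'' are programs, not arguments. So your text should be read as a correct strategic reduction of an open problem --- consistent with the status the paper itself assigns to the statement --- and not as a proof.
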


     \vskip 12pt

  \noindent{ {\bf Acknowledgments.} The first author would like to thank professor Hongyu Wang for stimulating
                              discussions.}

\end{document}